\documentclass[12pt]{article}
\usepackage{amsmath,amsthm}
\usepackage{amssymb}

\setlength{\textwidth}{165.0truemm}
\setlength{\textheight}{250.0truemm}
\setlength{\oddsidemargin}{-3.6mm}
\setlength{\evensidemargin}{3.6mm}
\setlength{\topmargin}{-12.5truemm}

\renewcommand{\leq}{\leqslant}

\renewcommand{\le}{\leqslant}

\def\cref#1{Corollary~$\ref{#1}$}

\input epsf

\pagestyle{plain}

\usepackage{
amsmath,  amsthm, amssymb, amsthm}

\usepackage{graphics}
\usepackage{epsfig}




\usepackage{amsmath}
\usepackage{amsfonts}
\usepackage{amssymb}


\newtheorem{prop}{Proposition}[section]

\newcommand{\bgeqn}{\begin{eqnarray}}
\newcommand{\edeqn}{\end{eqnarray}}
\newcommand{\bgeq}{\begin{eqnarray*}}
\newcommand{\edeq}{\end{eqnarray*}}
\newcommand{\bec}{\begin{center}}
\newcommand{\enc}{\end{center}}






\newcommand{\be}{\begin{equation}}
\newcommand{\ee}{\end{equation}}





\bibliographystyle{plain}

\setlength{\textwidth}{16cm} \setlength{\textheight}{23cm}
\setlength{\oddsidemargin}{0.0cm}
\setlength{\evensidemargin}{0.0cm} \setlength{\topmargin}{-1cm}
\setlength{\parskip}{0.25cm}



\usepackage{amsfonts}
\usepackage{epsfig}

\usepackage{amsmath}
\usepackage{amsthm}
\usepackage{amssymb}
\usepackage{epsfig}
\usepackage{color}



\theoremstyle{plain}


\begin{document}

\title{\bf A Simple Approach to Constructing Quasi-Sudoku-based Sliced Space-Filling Designs}

\author{
Diane Donovan$^1$, Benjamin Haaland$^{2,3}$, David J. Nott$^4$  \\
$^1$ University of Queensland, Brisbane, Australia\\
$^2$Georgia Institute of Technology, USA\\
$^3$Duke-NUS Graduate Medical School, Singapore\\
$^3$National University of Singapore, Singapore}

\date{\today}

\maketitle

\begin{abstract}
\noindent Sliced Sudoku-based space-filling designs and, more generally, quasi-sliced orthogonal array-based space-filling designs are useful experimental designs in several contexts, including computer experiments with categorical in addition to quantitative inputs and cross-validation. Here, we provide a straightforward construction of doubly orthogonal quasi-Sudoku Latin squares which can be used to generate sliced space-filling designs which achieve uniformity  in one and two-dimensional projections for both the full design and each slice. A construction of quasi-sliced orthogonal arrays based on these constructed doubly orthogonal quasi-Sudoku Latin squares is also provided and can, in turn, be used to generate sliced space-filling designs which achieve uniformity  in one and two-dimensional projections for the full design and and uniformity in two-dimensional projections for each slice. These constructions are very practical to implement and yield a spectrum of design sizes and numbers of factors not currently broadly available.
\end{abstract}

\noindent
KEY WORDS:
Computer experiments, space-filling designs, Sudoku, sliced experimental designs.

\section{Introduction}

In the popular game Sudoku, players are presented with a
nine-by-nine array, divided into nine three-by-three subsubarrays,
and partially filled with the numbers $1$ through $9$. The goal is
to fill the nine-by-nine array with the numbers $1$ through $9$ so
that each row, column, and three-by-three subarray contains no
repeated numbers. See a starting and completed Sudoku square in
Figure \ref{SudokuGrid} \cite{usatoday}.

\begin{figure}[!h]\label{SudokuGrid}
\centering
  \begin{tabular}{|ccc|ccc|ccc|}
\hline
&&&\bf 7&\bf 3&\bf 5&\bf 8&\bf 2&\\
\bf 8&&&&\bf 1&&&&\bf 5\\
&&&&\bf 2&\bf 6&&\bf 1&\\
\hline
&\bf 8&\bf 1&&\bf 7&&&&\bf 9\\
&\bf 6&&&\bf 9&&&\bf 5&\\
\bf 9&&&&\bf 5&&\bf 3&\bf 8&\\
\hline
&\bf 3&&\bf 1&\bf 4&&&&\\
\bf 4&&&&\bf 6&&&&\bf 3\\
&\bf 9&\bf 2&\bf 3&\bf 8&\bf 7&&&\\
\hline
  \end{tabular}\quad\quad\quad
\begin{tabular}{|ccc|ccc|ccc|}
\hline
1&4&9&\bf 7&\bf 3&\bf 5&\bf 8&\bf 2&6\\
\bf 8&2&6&9&\bf 1&4&7&3&\bf 5\\
3&5&7&8&\bf 2&\bf 6&9&\bf 1&4\\
\hline
5&\bf 8&\bf 1&2&\bf 7&3&4&6&\bf 9\\
2&\bf 6&3&4&\bf 9&8&1&\bf 5&7\\
\bf 9&7&4&6&\bf 5&1&\bf 3&\bf 8&2\\
\hline
7&\bf 3&5&\bf 1&\bf 4&2&6&9&8\\
\bf 4&1&8&5&\bf 6&9&2&7&\bf 3\\
6&\bf 9&\bf 2&\bf 3&\bf 8&\bf 7&5&4&1\\
\hline
  \end{tabular}
\caption{A starting and completed Sudoku square \cite{usatoday}.}
\end{figure}

Sets of (completed) Sudoku squares, as well as generalizations
thereof,  can be used to construct \emph{sliced space-filling
designs} achieving maximal uniformity in both one and
two-dimensional projections for both the complete design and each
subdesign, or slice. Sliced space-filling designs are experimental
designs which can be partitioned into groups of subdesigns so that
both the full design and each subdesign achieve some type of
uniformity. These types of designs are broadly useful for
collecting data from computer experiments, large and
time-consuming mathematical codes used to model real world systems
such as the climate or a component in an engineering design
problem. Sliced space-filling designs are particularly useful for
computer experiments with qualitative and quantitative inputs
\cite{qianWu}, multiple levels of accuracy \cite{haalandQian}, and
cross-validation problems in the context of
computer experiments \cite{zhangQian}. Sudoku-based sliced space-filling designs
were introduced in \cite{XuHaalandQian} and a construction was
given using doubly orthogonal Sudoku squares, whose complete
arrays are orthogonal and whose subarrays are orthogonal after a
projection. In \cite{XuHaalandQian}, doubly orthogonal Sudoku
squares were constructed using the techniques developed in
\cite{PedersenVis} along with a subfield projection.

Here, we give a straightforward construction of doubly
orthogonal \emph{quasi}-Sudoku  squares relying on the existence
of sets of orthogonal Latin squares, which are relatively
well-described and available for a broad range of sizes
\cite{colbourn,Denes,Raghavarao}.
Further, the presented construction of sliced Sudoku-based
space-filling designs is available for a spectrum of sample
sizes and number of factors combinations.
The
remainder of this article is organized as follows. Section
\ref{notationDef} provides notation and definitions which will be
used throughout. Section \ref{construction} provides a
construction for sets of pairwise doubly orthogonal quasi-Sudoku Latin
squares which is based on sets of orthogonal Latin squares.
Section \ref{example} illustrates the presented techniques with an
example. Section \ref{sec:oa} notes the connection with
quasi-sliced asymmetric orthogonal arrays. Finally, Section
\ref{construction2} reviews the construction of Sudoku-based
sliced space-filling designs from \cite{XuHaalandQian}.

\section{Notation and Definitions}\label{notationDef}

Let $[n]=\{0,1,\dots,n-1\}$ and $A=[A(i,j)]$ be a {\em Latin
square} of order $n$; that is,  an $n\times n$ array in which
each of the entries in a set $N$ (usually $[n]$) of size $n$
occurs once in every row and once in every column. Two Latin
squares $A=[A(i,j)]$ and $B=[B(i,j)]$, of the same order, are said
to be {\em orthogonal} if, when we superimpose one on top of the
other, the arrays contain each of the $n^2$ ordered pairs
$(x,y)$, $x,y\in N$ exactly once.
It is useful to note that
$A$ and $B$ are orthogonal if and only if for all
$p,q,s,t\in N$
\begin{eqnarray*}
A(p,q)=A(s,t)\Longrightarrow
B(p,q)\neq B(s,t).
\end{eqnarray*}
Let  $A=[A(i,j)]$ and $B=[B(i,j)]$ be  two 
orthogonal Latin squares of order $n$. For $n=s^2$, let $\Pi$
denote a projection from $N$ to $[s]$, $\Pi:N\rightarrow [s]$,
and let ${\cal O}=\{(\Pi(A(i,j)),\Pi(B(i,j)))\mid i,j\in N\}$.
We may think of ${\cal O}$ as an $n\times n$ array obtained by
superimposing $\Pi(A)$ and $\Pi(B)$. The Latin squares $A$ and $B$
are said to be {\em doubly orthogonal} if 
there exists a projection $\Pi$ such that
${\cal O}$ can be
partitioned into $s\times s$ subarrays with the cells of each
subarray containing the $s^2$ ordered pairs $(x,y)$, $0\leq x,y\leq
s-1$. For $n=rs$, let $\Pi_r$ denote a projection from $N$ to
$[r]$, $\Pi_r:N\rightarrow [r]$, let $\Pi_s$ denote a projection
from $N$ to $[s]$, $\Pi_s:N\rightarrow [s]$, and ${\cal
O}=\{(\Pi_r(A(i,j)),\Pi_s(B(i,j)))\mid i,j\in N\}$. The Latin
squares $A$ and $B$ are said to be {\em  doubly orthogonal} if
there exist projections $\Pi_r$ and $\Pi_s$ such that
${\cal O}$ can be partitioned into $r\times s$ subarrays with
the cells of each subarray containing the $rs$ ordered pairs $(x,y)$
where $0\leq x\leq r-1$ and $0\leq y\leq s-1$.

An $m^2\times m^2$ array is said to be a {\em Sudoku Latin
square},  on the set $X$ of size $m^2$, if it is a Latin square
and we can label the rows  by $(p,s)$, $0\leq p,s\leq m-1$ and
columns by $(q,t)$, $0\leq q,t\leq m-1$ such that  for each $p$
and $q$ the subarray defined by the cells
\begin{eqnarray*}
((p,s),(q,t)),&0\leq s,t\leq m-1,
\end{eqnarray*}
contains each entry of $X$ precisely once.
 A $mn\times mn$ array is said be a {\em quasi-Sudoku Latin square},
  on the set $X$ of order $mn$, if it is a Latin square and we can
  label the rows  by $(p,s)$, $0\leq p\leq m-1$, $0\leq s\leq n-1$
   and columns $(q,t)$, $0\leq q\leq n-1$, $0\leq t\leq m-1$ such that  for each $p$ and $q$ the subarray defined by the cells
\begin{eqnarray*}
((p,s),(q,t)),&0\leq s\leq n-1,&0\leq t\leq m-1,
\end{eqnarray*}
contains each entry of $X$ precisely once. Two Sudoku Latin
squares or quasi-Sudoku Latin squares are orthogonal (doubly
orthogonal) if they are orthogonal (doubly orthogonal) Latin
squares.

\section{Construction of Doubly Orthogonal Quasi-Sudoku Latin Squares}\label{construction}

Here, we construct quasi-Sudoku Latin squares
which
are doubly orthogonal using sets of (pairwise) orthogonal Latin squares.
This is done by first using a direct
product construction to construct orthogonal quasi-Sudoku Latin
squares and then showing that they are doubly orthogonal.

For the remainder of this section we will take $A_1$ and $A_2$ to be
two Latin squares of order $m$ and $n$, respectively.  We can
construct a new Latin square of order $mn$ by taking the {\em
direct product}, $A_1\otimes A_2$, of $A_1$ with $A_2$,
where $(A_1(p,q),A_2(s,t))$ is the element in row $np+s$ and column $nq+t$ of $A_1\otimes A_2$, $0\le p,q\le m-1$ and $0\le s,t\le n-1$.
Label the rows of $A_1\otimes A_2$ as $(p,s)$ and the columns of
$A_1\otimes A_2$ as $(q,t)$.
For fixed $p$ and $q$, the subarray defined by the set of
cells $\{((p,s),(q,t))\mid 0\leq s,t\leq n-1\}$ is then isomorphic to
$A_2$ .

The
next proposition attests that orthogonality is maintained under the direct product. For a proof, see D\'{e}nes and Keedwell \cite{Denes} page
427.

\begin{prop}\label{orth prods}
If $A_1$ and $B_1$ are orthogonal Latin squares of order $m$ and
$A_2$ and $B_2$ are orthogonal Latin squares of order $n$, then
$A_1\otimes A_2$ and $B_1\otimes B_2$ are orthogonal Latin
squares of order $mn$.
\end{prop}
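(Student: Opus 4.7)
The plan is to verify directly from the definitions that $A_1\otimes A_2$ and $B_1\otimes B_2$, viewed as $mn\times mn$ arrays on the alphabet $[m]\times[n]$, form an orthogonal pair of Latin squares. The construction is small enough that the whole proof reduces to two applications of the hypothesized orthogonality, plus some index bookkeeping.

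First I would dispatch the (essentially routine) point that each direct product is itself a Latin square. In row $(p,s)$ of $A_1\otimes A_2$ the entries are $(A_1(p,q),A_2(s,t))$ with $(q,t)$ ranging over $[m]\times[n]$; as $q$ varies $A_1(p,q)$ hits every element of $[m]$ exactly once (since $A_1$ is Latin), and as $t$ varies $A_2(s,t)$ hits every element of $[n]$ exactly once, so every ordered pair in $[m]\times[n]$ occurs once in the row. The column argument is identical, and of course the same remarks apply to $B_1\otimes B_2$.

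The main step is orthogonality, for which I would use the injectivity criterion recalled just before the proposition. Suppose two cells $(np+s,\,nq+t)$ and $(np'+s',\,nq'+t')$ carry the same ordered pair under the superposition of $A_1\otimes A_2$ and $B_1\otimes B_2$. Matching the $A$-components and the $B$-components separately gives
\begin{eqnarray*}
A_1(p,q)=A_1(p',q'),\quad B_1(p,q)=B_1(p',q'),\\
A_2(s,t)=A_2(s',t'),\quad B_2(s,t)=B_2(s',t').
\end{eqnarray*}
Orthogonality of $A_1$ and $B_1$ forces $(p,q)=(p',q')$, and orthogonality of $A_2$ and $B_2$ forces $(s,t)=(s',t')$, so the two cells coincide. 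Hence the $(mn)^2$ superimposed pairs are all distinct, and by cardinality they exhaust $([m]\times[n])^2$. Equivalently, one can run the argument in the existence direction: given any target $((a_1,a_2),(b_1,b_2))$, the orthogonality of $(A_1,B_1)$ selects the unique $(p,q)$ with $A_1(p,q)=a_1$ and $B_1(p,q)=b_1$, the orthogonality of $(A_2,B_2)$ selects the unique $(s,t)$, and the cell $(np+s,\,nq+t)$ is the unique preimage.

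There is no real obstacle here beyond index management: one must keep the block coordinates $(p,q)\in[m]\times[m]$, which govern the $A_1,B_1$ layer, clearly separated from the intra-block coordinates $(s,t)\in[n]\times[n]$, which govern the $A_2,B_2$ layer. Once the row/column labels $(p,s)$ and $(q,t)$ are fixed in the notation of the preceding paragraphs of the paper, the proof is a direct unpacking of the definition of $\otimes$ followed by two uses of the orthogonality hypothesis.
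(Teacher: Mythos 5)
Your proof is correct: the Latin property of each factor gives the Latin property of the product row by row and column by column, and the injectivity form of orthogonality applied once to the $(A_1,B_1)$ layer and once to the $(A_2,B_2)$ layer forces the two cells to coincide, which is exactly the standard argument. The paper itself gives no proof of Proposition~\ref{orth prods}, deferring to D\'enes and Keedwell, and your direct verification is essentially the classical one found there, so there is nothing to add.
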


An example of orthogonal Latin squares $A_1\otimes A_2$ and
$B_1\otimes B_2$, constructed using the orthogonal Latin squares shown in Figure \ref{MOLS34}, is
described in Section \ref{example} and
shown in Figure \ref{ex:orthog}.

\begin{prop}\label{double sudoku}
If $A_1$ and $B_1$ are orthogonal Latin squares of order $m$ and
$A_2$ and $B_2$ are orthogonal Latin squares of order $n$, then $A_1\otimes A_2$ and $B_1\otimes B_2$ are doubly orthogonal quasi-Sudoku Latin squares of order
$mn$.
\end{prop}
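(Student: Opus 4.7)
The plan is to build on \tref{orth prods}, which already provides that $A_1\otimes A_2$ and $B_1\otimes B_2$ are orthogonal Latin squares of order $mn$ on the alphabet $[m]\times[n]$. It remains to verify (i) the quasi-Sudoku block structure for each square and (ii) double orthogonality under a natural pair of projections.

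For the quasi-Sudoku property I will exhibit an explicit partition of the $(mn)^2$ cells of $A_1\otimes A_2$ into $mn$ blocks, each containing every entry of $[m]\times[n]$ exactly once. Recall that cell $((p,s),(q,t))$ holds the pair $(A_1(p,q),A_2(s,t))$. For each fixed $(q,s)\in[m]\times[n]$, consider the block consisting of the cells $((p,s),(q,t))$ with $p\in[m]$ and $t\in[n]$. With $q$ fixed and $p$ varying, $A_1(p,q)$ sweeps column $q$ of $A_1$ and hence ranges over $[m]$; independently, with $s$ fixed and $t$ varying, $A_2(s,t)$ sweeps row $s$ of $A_2$ and hence ranges over $[n]$. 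Consequently the $mn$ pairs in the block cover $[m]\times[n]$ exactly once, which (after relabeling rows and columns to match the outer/inner conventions of \sref{notationDef}) is exactly the quasi-Sudoku block condition. The same argument applied to $B_1,B_2$ handles $B_1\otimes B_2$.

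For double orthogonality I will take the projections $\Pi_m\colon[m]\times[n]\to[m]$ and $\Pi_n\colon[m]\times[n]\to[n]$ onto the first and second coordinates, respectively. The entry at cell $((p,s),(q,t))$ of the superposition $\{(\Pi_m(A_1\otimes A_2)(i,j),\Pi_n(B_1\otimes B_2)(i,j))\}$ is then the ordered pair $(A_1(p,q),B_2(s,t))$. Using the same $(q,s)$-indexed partition into $mn$ blocks of dimension $m\times n$, the argument above goes through verbatim with $A_2$ replaced by $B_2$: within each block $A_1(p,q)$ sweeps $[m]$ and independently $B_2(s,t)$ sweeps $[n]$, so every ordered pair $(x,y)\in[m]\times[n]$ appears exactly once. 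This verifies the doubly orthogonal condition with projections $(\Pi_m,\Pi_n)$ and dimensions $r=m$, $s=n$.

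The only real obstacle is notational bookkeeping: in the direct product both row and column indices come from $[m]\times[n]$, whereas the quasi-Sudoku definition of \sref{notationDef} swaps the roles of outer and inner coordinates between rows and columns, so a modest relabeling is required to align the block partition with the stated conventions. Once this is made explicit, the Latin-square property of each of $A_1,A_2,B_1,B_2$ drives both verifications by the same short argument, and no combinatorial content beyond \tref{orth prods} is needed.
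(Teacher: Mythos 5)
Your proof is correct and follows essentially the same route as the paper: the same $(q,s)$-indexed block partition (with the same row reordering to make the blocks contiguous), and the coordinate projections $\Pi_m(x,y)=x$, $\Pi_n(x,y)=y$, which are precisely the first of the two admissible choices the paper exhibits. The paper additionally records a second, modular choice of projections (which yields non-isomorphic projected squares), but that is not needed to establish the proposition.
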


\begin{proof}
We begin by verifying that there exists an arrangement of the rows of both arrays
$A_1\otimes A_2$ and  $B_1\otimes B_2$ which verifies that they
are quasi-Sudoku latin squares.

For fixed $q$, the entries $A_1(p,q)$ and  $B_1(p,q)$  take the
values in column $q$ of $A_1$ and $B_1$ respectively.  That is,
for fixed $q$, $\{A_1(p,q)\mid0\leq p\leq m-1\}=\{B_1(p,q)\mid
0\leq p\leq m-1\}=[m]$. Likewise, for fixed $s$, the entries
$A_2(s,t)$ and $B_2(s,t)$ take values in row $s$ of $A_2$ and
$B_2$, respectively. That is, for fixed $s$, $\{A_2(s,t)\mid 0\leq
t\leq n-1\}=\{B_2(s,t)\mid 0\leq t\leq n-1\}=[n]$. Thus for fixed $q$ and $s$
\begin{eqnarray*}
[m]\times [n]&=&\{(A_1(p,q),A_2(s,t))\mid 0\leq p\leq m-1,0\leq
t\leq n-1\}\\&=& \{(B_1(p,q),B_2(s,t))\mid 0\leq p\leq m-1,0\leq
t\leq n-1\}.
\end{eqnarray*}
Hence we will assume that the rows of $A_1\otimes A_2$ and
$B_1\otimes B_2$ have been reordered by moving row $np+s$ to position $ms+p$, where $0\leq p\leq
m-1$ and $0\leq s\leq n-1$. The order of the columns, however,
remains as $qn+t$, where $0\leq q\leq m-1$ and $0\leq t\leq n-1$.
Now  for fixed $s$ and $q$ the subarrays of $A_1\otimes A_2$ and
$B_1\otimes B_2$ defined by the intersection of rows $ms+p$ with
columns $qn+t$, where $0\leq p\leq m-1$ and $0\leq t\leq n-1$
contains each of the $mn$ entries precisely once. Note that the
reordering of rows has been consistently applied to both
$A_1\otimes A_2$ and $B_1\otimes B_2$, therefore these Latin
squares are still orthogonal.

We select two onto functions $\Pi_m:[m]\times [n]\rightarrow [m]$ and $\Pi_n:[m]\times [n]\rightarrow [n]$ such that 
 if 
$\Pi_m(A_1(p,q),A_2(s,t))=\Pi_m(A_1(p,q),A_2(s,t'))$ then $t\neq t'$ and if
$\Pi_n(B_1(p,q),B_2(s,t))=\Pi_n(B_1(p,q),B_2(s,t'))$ then $t=t'$. Such functions are not hard to find for instance we could take
\begin{eqnarray*}
\Pi_m(A_1(p,q),A_2(s,t))&=&A_1(p,q)\mbox{ and }\\
\Pi_n(B_1(p,q),B_2(s,t))&=&B_2(s,t),
\end{eqnarray*}
or if $m$ and $n$ are coprime, with $m>n$, take
\begin{eqnarray*}
\Pi_m(A_1(p,q),A_2(s,t))&=&(n\times A_1(p,q)+A_2(s,t))\mbox{ mod }m\mbox{ and }\\
\Pi_n(B_1(p,q),B_2(s,t))&=&(n\times B_1(p,q)+B_2(s,t))\mbox{ mod
}n.
\end{eqnarray*}
In this latter case since $m$ and $n$ are coprime and $m>n$, for
fixed $q$, $\{n\times A_1(p,q)\mbox{ mod }m\mid 0\leq p\leq
m-1\}=[m]$ and since $A_2(s,t)<m$,  $\{n\times
A_1(p,q)+A_2(s,t)\mbox{ mod }m\mid 0\leq p\leq m-1\}=[m]$, so for
any fixed $q$, $s$ and $t$, $\{\Pi_m((A_1(p,q),A_2(s,t)))\mid 0\leq
p\leq m-1\} =[m]$. Further,
$\{(\Pi_m((A_1(p,q),A_2(s,t))),(\Pi_n((B_1(p,q),B_2(s,t)))\mid
0\leq p\leq m-1,0\leq t\leq n\} =[m]\times [n] $. This verifies
that $A_1\otimes A_2$ and $B_1\otimes B_2$ are doubly orthogonal
quasi-Sudoku squares.

\end{proof}

It should be noted that the different projections given in the above proof produce non-isomorphic squares. This will be illustrated in the example given below. 
For $m=n$, the extension of these pairwise properties to more than two orthogonal direct product designs is immediate, if the component designs are available.
Further, it should be noted that when we write $A_1\otimes A_2$ and
$B_1\otimes B_2$ are orthogonal quasi-Sudoku Latin squares we are assuming that the rows of the direct products have been rearranged to the required format for Sudoku Latin squares.

\section{An Example}\label{example}


As an illustration, we will construct doubly orthogonal quasi-Sudoku Latin squares of
order 12. We begin with two orthogonal Latin squares of order $n=3$
and two of the three orthogonal Latin squares of order $m=4$, as shown in Figure \ref{MOLS34}. Note
it does not matter which two orthogonal Latin squares of order $4$
we choose, so we arbitrarily select the first and the last. In general,
if the underlying Latin squares are non-isomorphic then it is
possible to construct sets of non-isomorphic doubly orthogonal
quasi-Sudoku Latin squares.
\begin{figure}[!h]
\begin{center}
\begin{tabular}{ccccc}
$A_1$&&$B_1$&$A_2$&$B_2$\\
\begin{tabular}{|cccc|}
\hline
0&1&2&3 \\
1&0&3&2 \\
2&3&0&1 \\
3&2&1&0\\
\hline
\end{tabular}&
\begin{tabular}{|cccc|}
\hline
0&1&2&3 \\
3&2&1&0\\
1&0&3&2 \\
2&3&0&1 \\
\hline
\end{tabular}&
\begin{tabular}{|cccc|}
\hline
0&1&2&3\\
2&3&0&1\\
3&2&1&0\\
1&0&3&2\\
\hline
\end{tabular}
&
\begin{tabular}{|ccc|}
\hline
0&1&2   \\
1&2&0\\
2&0&1
\\
\hline
\end{tabular}&
\begin{tabular}{|ccc|}
\hline 0&1&2
\\
2&0&1
\\
    1&2&0
\\
\hline
\end{tabular}
\end{tabular}
\end{center}
\caption{The three (pairwise) orthogonal Latin squares of order 4 and the two orthogonal Latin squares of order 3.}\label{MOLS34}
\end{figure}
In Figure \ref{ex:orthog} we construct $A_1\otimes A_2$ and
$B_1\otimes B_2$ using the direct product construction. To
facilitate understanding, rows and columns have been labelled and
$(x,y)$ has been replaced by $xy$.
  In Figure \ref{ex:sudoku} the rows have been reordered
as first $(p,0)$ for $0\leq p\leq 3$, then $(p,1)$ for $0\leq p\leq 3$, then $(p,2)$ for $0\leq p\leq 3$
to emphasise the fact that
  these squares are quasi-Sudoku Latin squares.

In Figure
  \ref{ex:proj} we project down each subsquare in Figure \ref{ex:sudoku}  by applying the
mappings $\Pi_4: [4]\times
[3]\rightarrow [4]$ and $\Pi_3:[4]\times [3]\rightarrow [3]$,
given by
\begin{eqnarray*}
\Pi_4((A_1(p,q),A_2(s,t)))&=&(3\times A_1(p,q)+A_2(s,t))\mbox{ mod }4,\mbox{ and }\\
\Pi_3((B_1(p,q),B_2(s,t)))&=&(3\times B_1(p,q)+B_2(s,t))\mbox{ mod
}3,
\end{eqnarray*}
to the entries in the quasi-Sudoku squares obtained by reordering the rows of the Latin squares $A_1\otimes A_2$ and
$B_1\otimes B_2$
and label these as
$\Pi_4(A_1\otimes A_2)$ and $\Pi_3(B_1\otimes B_2)$.
Then, in Figure
  \ref{ex:superimposition} we superimpose the projections $\Pi_4(A_1\otimes
A_2)$ and $\Pi_3(B_1\otimes B_2)$ to
  verify that indeed  $A_1\otimes A_2$ and
  $B_1\otimes B_2$ are doubly orthogonal quasi-Sudoku Latin
  squares of order 12, as each $4\times 3$ subsquare has each of
the ordered pairs $(x,y)$, $0\leq x\leq 3$ and $0\leq y\leq 2$.

\begin{figure}[!h]
\begin{center}
\begin{tabular}{c}
$A_1\otimes A_2$\\
\begin{tabular}{c|ccc|ccc|ccc|ccc|}
\multicolumn{1}{c}{}&\multicolumn{1}{c}{00}&\multicolumn{1}{c}{01}&\multicolumn{1}{c}{02}&\multicolumn{1}{c}{10}&
\multicolumn{1}{c}{11}&\multicolumn{1}{c}{12}&\multicolumn{1}{c}{20}&\multicolumn{1}{c}{21}&\multicolumn{1}{c}{22}&\multicolumn{1}{c}{30}&\multicolumn{1}{c}{31}&\multicolumn{1}{c}{32}\\
\cline{2-13}
\multicolumn{1}{r|}{00}&00&01&02&10&11&12&20&21&22&30&31&32\\
\multicolumn{1}{r|}{01}&01&02&00&11&12&10&21&22&20&31&32&30\\
\multicolumn{1}{r|}{02}&02&00&01&12&10&11&22&20&21&32&30&31\\
\cline{2-13}
\multicolumn{1}{r|}{10}&10&11&12&00&01&02&30&31&32&20&21&22\\
\multicolumn{1}{r|}{11}&11&12&10&01&02&00&31&32&30&21&22&20\\
\multicolumn{1}{r|}{12}&12&10&11&02&00&01&32&30&31&22&20&21\\
\cline{2-13}
\multicolumn{1}{r|}{20}&20&21&22&30&31&32&00&01&02&10&11&12\\
\multicolumn{1}{r|}{21}&21&22&20&31&32&30&01&02&00&11&12&10\\
\multicolumn{1}{r|}{22}&22&20&21&32&30&31&02&00&01&12&10&11\\
\cline{2-13}
\multicolumn{1}{r|}{30}&30&31&32&20&21&22&10&11&12&00&01&02\\
\multicolumn{1}{r|}{31}&31&32&30&21&22&20&11&12&10&01&02&00\\
\multicolumn{1}{r|}{32}&32&30&31&22&20&21&12&10&11&02&00&01\\
\cline{2-13}
\end{tabular}\\
$ $\\
$B_1\otimes B_2$\\
\begin{tabular}{c|ccc|ccc|ccc|ccc|}
\multicolumn{1}{c}{}&\multicolumn{1}{c}{00}&\multicolumn{1}{c}{01}&\multicolumn{1}{c}{02}&\multicolumn{1}{c}{10}&
\multicolumn{1}{c}{11}&\multicolumn{1}{c}{12}&\multicolumn{1}{c}{20}&\multicolumn{1}{c}{21}&\multicolumn{1}{c}{22}&\multicolumn{1}{c}{30}&\multicolumn{1}{c}{31}&\multicolumn{1}{c}{32}\\
\cline{2-13}
\multicolumn{1}{r|}{00}&00&01&02&10&11&12&20&21&22&30&31&32\\
\multicolumn{1}{r|}{01}&02&00&01&12&10&11&22&20&21&32&30&31\\
\multicolumn{1}{r|}{02}&01&02&00&11&12&10&21&22&20&31&32&30\\
\cline{2-13}
\multicolumn{1}{r|}{10}&20&21&22&30&31&32&00&01&02&10&11&12\\
\multicolumn{1}{r|}{11}&22&20&21&32&30&31&02&00&01&12&10&11\\
\multicolumn{1}{r|}{12}&21&22&20&31&32&30&01&02&00&11&12&10\\
\cline{2-13}
\multicolumn{1}{r|}{20}&30&31&32&20&21&22&10&11&12&00&01&02\\
\multicolumn{1}{r|}{21}&32&30&31&22&20&21&12&10&11&02&00&01\\
\multicolumn{1}{r|}{22}&31&32&30&21&22&20&11&12&10&01&02&00\\
\cline{2-13}
\multicolumn{1}{r|}{30}&10&11&12&00&01&02&30&31&32&20&21&22\\
\multicolumn{1}{r|}{31}&12&10&11&02&00&01&32&30&31&22&20&21\\
\multicolumn{1}{r|}{32}&11&12&10&01&02&00&31&32&30&21&22&20\\
\cline{2-13}
\end{tabular}
\end{tabular}
\end{center}
\caption{A pair of orthogonal Latin squares of order $12$}\label{ex:orthog}
\end{figure}

\begin{figure}[h]
\begin{center}
\begin{tabular}{c}
Reordered $A_1\otimes A_2$\\
\begin{tabular}{c|ccc|ccc|ccc|ccc|}
\multicolumn{1}{c}{}&\multicolumn{1}{c}{00}&\multicolumn{1}{c}{01}&\multicolumn{1}{c}{02}&\multicolumn{1}{c}{10}&
\multicolumn{1}{c}{11}&\multicolumn{1}{c}{12}&\multicolumn{1}{c}{20}&\multicolumn{1}{c}{21}&\multicolumn{1}{c}{22}&\multicolumn{1}{c}{30}&\multicolumn{1}{c}{31}&\multicolumn{1}{c}{32}\\
\cline{2-13}
\multicolumn{1}{r|}{00}&00&01&02&10&11&12&20&21&22&30&31&32\\
\multicolumn{1}{r|}{10}&10&11&12&00&01&02&30&31&32&20&21&22\\
\multicolumn{1}{r|}{20}&20&21&22&30&31&32&00&01&02&10&11&12\\
\multicolumn{1}{r|}{30}&30&31&32&20&21&22&10&11&12&00&01&02\\
\cline{2-13}
\multicolumn{1}{r|}{01}&01&02&00&11&12&10&21&22&20&31&32&30\\
\multicolumn{1}{r|}{11}&11&12&10&01&02&00&31&32&30&21&22&20\\
\multicolumn{1}{r|}{21}&21&22&20&31&32&30&01&02&00&11&12&10\\
\multicolumn{1}{r|}{31}&31&32&30&21&22&20&11&12&10&01&02&00\\
\cline{2-13}
\multicolumn{1}{r|}{02}&02&00&01&12&10&11&22&20&21&32&30&31\\
\multicolumn{1}{r|}{12}&12&10&11&02&00&01&32&30&31&22&20&21\\
\multicolumn{1}{r|}{22}&22&20&21&32&30&31&02&00&01&12&10&11\\
\multicolumn{1}{r|}{32}&32&30&31&22&20&21&12&10&11&02&00&01\\
\cline{2-13}
\end{tabular}\\
$ $\\
Reordered $B_1\otimes B_2$\\
\begin{tabular}{c|ccc|ccc|ccc|ccc|}
\multicolumn{1}{c}{}&\multicolumn{1}{c}{00}&\multicolumn{1}{c}{01}&\multicolumn{1}{c}{02}&\multicolumn{1}{c}{10}&
\multicolumn{1}{c}{11}&\multicolumn{1}{c}{12}&\multicolumn{1}{c}{20}&\multicolumn{1}{c}{21}&\multicolumn{1}{c}{22}&\multicolumn{1}{c}{30}&\multicolumn{1}{c}{31}&\multicolumn{1}{c}{32}\\
\cline{2-13}
\multicolumn{1}{r|}{00}&00&01&02&10&11&12&20&21&22&30&31&32\\
\multicolumn{1}{r|}{10}&20&21&22&30&31&32&00&01&02&10&11&12\\
\multicolumn{1}{r|}{20}&30&31&32&20&21&22&10&11&12&00&01&02\\
\multicolumn{1}{r|}{30}&10&11&12&00&01&02&30&31&32&20&21&22\\
\cline{2-13}
\multicolumn{1}{r|}{01}&02&00&01&12&10&11&22&20&21&32&30&31\\
\multicolumn{1}{r|}{11}&22&20&21&32&30&31&02&00&01&12&10&11\\
\multicolumn{1}{r|}{21}&32&30&31&22&20&21&12&10&11&02&00&01\\
\multicolumn{1}{r|}{31}&12&10&11&02&00&01&32&30&31&22&20&21\\
\cline{2-13}
\multicolumn{1}{r|}{02}&01&02&00&11&12&10&21&22&20&31&32&30\\
\multicolumn{1}{r|}{12}&21&22&20&31&32&30&01&02&00&11&12&10\\
\multicolumn{1}{r|}{22}&31&32&30&21&22&20&11&12&10&01&02&00\\
\multicolumn{1}{r|}{32}&11&12&10&01&02&00&31&32&30&21&22&20\\
\cline{2-13}
\end{tabular}
\end{tabular}
\end{center}
\caption{A pair of orthogonal quasi-Sudoku Latin squares of order
$12.$}\label{ex:sudoku}
\end{figure}

\begin{figure}[!h]
\begin{center}
\begin{tabular}{c}
$\Pi_4(A_1\otimes A_2)$\\
\begin{tabular}{c|ccc|ccc|ccc|ccc|}
\multicolumn{1}{c}{}&\multicolumn{1}{c}{00}&\multicolumn{1}{c}{01}&\multicolumn{1}{c}{02}&\multicolumn{1}{c}{10}&
\multicolumn{1}{c}{11}&\multicolumn{1}{c}{12}&\multicolumn{1}{c}{20}&\multicolumn{1}{c}{21}&\multicolumn{1}{c}{22}&\multicolumn{1}{c}{30}&\multicolumn{1}{c}{31}&\multicolumn{1}{c}{32}\\
\cline{2-13}
\multicolumn{1}{r|}{00}&0&1&2&3&0&1&2&3&0&1&2&3\\
\multicolumn{1}{r|}{10}&3&0&1&0&1&2&1&2&3&2&3&0\\
\multicolumn{1}{r|}{20}&2&3&0&1&2&3&0&1&2&3&0&1\\
\multicolumn{1}{r|}{30}&1&2&3&2&3&0&3&0&1&0&1&2\\
\cline{2-13}
\multicolumn{1}{r|}{01}&1&2&0&0&1&3&3&0&2&2&3&1\\
\multicolumn{1}{r|}{11}&0&1&3&1&2&0&2&3&1&3&0&2\\
\multicolumn{1}{r|}{21}&3&0&2&2&3&1&1&2&0&0&1&3\\
\multicolumn{1}{r|}{31}&2&3&1&3&0&2&0&1&3&1&2&0\\
\cline{2-13}
\multicolumn{1}{r|}{02}&2&0&1&1&3&0&0&2&3&3&1&2\\
\multicolumn{1}{r|}{12}&1&3&0&2&0&1&3&1&2&0&2&3\\
\multicolumn{1}{r|}{22}&0&2&3&3&1&2&2&0&1&1&3&0\\
\multicolumn{1}{r|}{32}&3&1&2&0&2&3&1&3&0&2&0&1\\
\cline{2-13}
\end{tabular}\\
$ $\\
$\Pi_3(B_1\otimes B_2)$\\
\begin{tabular}{c|ccc|ccc|ccc|ccc|}
\multicolumn{1}{c}{}&\multicolumn{1}{c}{00}&\multicolumn{1}{c}{01}&\multicolumn{1}{c}{02}&\multicolumn{1}{c}{10}&
\multicolumn{1}{c}{11}&\multicolumn{1}{c}{12}&\multicolumn{1}{c}{20}&\multicolumn{1}{c}{21}&\multicolumn{1}{c}{22}&\multicolumn{1}{c}{30}&\multicolumn{1}{c}{31}&\multicolumn{1}{c}{32}\\
\cline{2-13}
\multicolumn{1}{r|}{00}&0&1&2&0&1&2&0&1&2&0&1&2\\
\multicolumn{1}{r|}{10}&0&1&2&0&1&2&0&1&2&0&1&2\\
\multicolumn{1}{r|}{20}&0&1&2&0&1&2&0&1&2&0&1&2\\
\multicolumn{1}{r|}{30}&0&1&2&0&1&2&0&1&2&0&1&2\\
\cline{2-13}
\multicolumn{1}{r|}{01}&2&0&1&2&0&1&2&0&1&2&0&1\\
\multicolumn{1}{r|}{11}&2&0&1&2&0&1&2&0&1&2&0&1\\
\multicolumn{1}{r|}{21}&2&0&1&2&0&1&2&0&1&2&0&1\\
\multicolumn{1}{r|}{31}&2&0&1&2&0&1&2&0&1&2&0&1\\
\cline{2-13}
\multicolumn{1}{r|}{02}&1&2&0&1&2&0&1&2&0&1&2&0\\
\multicolumn{1}{r|}{12}&1&2&0&1&2&0&1&2&0&1&2&0\\
\multicolumn{1}{r|}{22}&1&2&0&1&2&0&1&2&0&1&2&0\\
\multicolumn{1}{r|}{32}&1&2&0&1&2&0&1&2&0&1&2&0\\
\cline{2-13}
\end{tabular}
\end{tabular}
\end{center}
\caption{Orthogonal quasi-Sudoku Latin squares under projections $\Pi_4$ and $\Pi_3$.}\label{ex:proj}
\end{figure}

\begin{figure}[h]
\begin{center}
\begin{tabular}{c}
$\Pi_4(A_1\otimes A_2),\Pi_3(B_1\otimes B_2)$\\
\begin{tabular}{c|ccc|ccc|ccc|ccc|}
\multicolumn{1}{c}{}&\multicolumn{1}{c}{00}&\multicolumn{1}{c}{01}&\multicolumn{1}{c}{02}&\multicolumn{1}{c}{10}&
\multicolumn{1}{c}{11}&\multicolumn{1}{c}{12}&\multicolumn{1}{c}{20}&\multicolumn{1}{c}{21}&\multicolumn{1}{c}{22}&\multicolumn{1}{c}{30}&\multicolumn{1}{c}{31}&\multicolumn{1}{c}{32}\\
\cline{2-13}
\multicolumn{1}{r|}{00}&0,0&1,1&2,2&3,0&0,1&1,2&2,0&3,1&0,2&1,0&2,1&3,2\\
\multicolumn{1}{r|}{10}&3,0&0,1&1,2&0,0&1,1&2,2&1,0&2,1&3,2&2,0&3,1&0,2\\
\multicolumn{1}{r|}{20}&2,0&3,1&0,2&1,0&2,1&3,2&0,0&1,1&2,2&3,0&0,1&1,2\\
\multicolumn{1}{r|}{30}&1,0&2,1&3,2&2,0&3,1&0,2&3,0&0,1&1,2&0,0&1,1&2,2\\
\cline{2-13}
\multicolumn{1}{r|}{01}&1,2&2,0&0,1&0,2&1,0&3,1&3,2&0,0&2,1&2,2&3,0&1,1\\
\multicolumn{1}{r|}{11}&0,2&1,0&3,1&1,2&2,0&0,1&2,2&3,0&1,1&3,2&0,0&2,1\\
\multicolumn{1}{r|}{21}&3,2&0,0&2,1&2,2&3,0&1,1&1,2&2,0&0,1&0,2&1,0&3,1\\
\multicolumn{1}{r|}{31}&2,2&3,0&1,1&3,2&0,0&2,1&0,2&1,0&3,1&1,2&2,0&0,1\\
\cline{2-13}
\multicolumn{1}{r|}{02}&2,1&0,2&1,0&1,1&3,2&0,0&0,1&2,2&3,0&3,1&1,2&2,0\\
\multicolumn{1}{r|}{12}&1,1&3,2&0,0&2,1&0,2&1,0&3,1&1,2&2,0&0,1&2,2&3,0\\
\multicolumn{1}{r|}{22}&0,1&2,2&3,0&3,1&1,2&2,0&2,1&0,2&1,0&1,1&3,2&0,0\\
\multicolumn{1}{r|}{32}&3,1&1,2&2,0&0,1&2,2&3,0&1,1&3,2&0,0&2,1&0,2&1,0\\
\cline{2-13}
\end{tabular}
\end{tabular}
\end{center}
\caption{The projected squares verifying that $A_1\otimes A_2$ and
$B_1\otimes B_2$ are doubly orthogonal quasi-Sudoku Latin squares
of order $12.$}\label{ex:superimposition} 
\end{figure}

If the projections $\Pi_4$ and $\Pi_3$, given above, are replaced by the projections
\begin{eqnarray*}
\Pi_4((A_1(p,q),A_2(s,t)))&=& A_1(p,q),\mbox{ and }\\
\Pi_3((B_1(p,q),B_2(s,t)))&=&B_2(s,t),
\end{eqnarray*}
we obtain the projected   square given in Figure \ref{ex:2superimposition}. Note that here each of the twelve subsquares can be obtained by reordering the rows and/or the columns of the first subsquare. This is not the case for the  projected square given in Figure \ref{ex:superimposition}. To see this consider the first subsquare and any of the subsquares on rows $01, 11, 21$ and $31$ of  Figure \ref{ex:superimposition}, setwise the rows of these subsquares do not equal any row in the first subsquare.

\begin{figure}[h]
\begin{center}
\begin{tabular}{c}
$\Pi_4(A_1\otimes A_2),\Pi_3(B_1\otimes B_2)$\\
\begin{tabular}{c|ccc|ccc|ccc|ccc|}
\multicolumn{1}{c}{}&\multicolumn{1}{c}{00}&\multicolumn{1}{c}{01}&\multicolumn{1}{c}{02}&\multicolumn{1}{c}{10}&
\multicolumn{1}{c}{11}&\multicolumn{1}{c}{12}&\multicolumn{1}{c}{20}&\multicolumn{1}{c}{21}&\multicolumn{1}{c}{22}&\multicolumn{1}{c}{30}&\multicolumn{1}{c}{31}&\multicolumn{1}{c}{32}\\
\cline{2-13}
\multicolumn{1}{r|}{00}&0,0&0,1&0,2&1,0&1,1&1,2&2,0&2,1&2,2&3,0&3,1&3,2\\
\multicolumn{1}{r|}{10}&1,0&1,1&1,2&0,0&0,1&0,2&3,0&3,1&3,2&2,0&2,1&2,2\\
\multicolumn{1}{r|}{20}&2,0&2,1&2,2&3,0&3,1&3,2&0,0&0,1&0,2&1,0&1,1&1,2\\
\multicolumn{1}{r|}{30}&3,0&3,1&3,2&2,0&2,1&2,2&1,0&1,1&1,2&0,0&0,1&0,2\\
\cline{2-13}
\multicolumn{1}{r|}{01}&0,2&0,0&0,1&1,2&1,0&1,1&2,2&2,0&2,1&3,2&3,0&3,1\\
\multicolumn{1}{r|}{11}&1,2&1,0&1,1&0,2&0,0&0,1&3,2&3,0&3,1&2,2&2,0&2,1\\
\multicolumn{1}{r|}{21}&2,2&2,0&2,1&3,2&3,0&3,1&0,2&0,0&0,1&1,2&1,0&1,1\\
\multicolumn{1}{r|}{31}&3,2&3,0&3,1&2,2&2,0&2,1&1,2&1,0&1,1&0,2&0,0&0,1\\
\cline{2-13}
\multicolumn{1}{r|}{02}&0,1&0,2&0,0&1,1&1,2&1,0&2,1&2,2&2,0&3,1&3,2&3,0\\
\multicolumn{1}{r|}{12}&1,1&1,2&1,0&0,1&0,2&0,0&3,1&3,2&3,0&2,1&2,2&2,0\\
\multicolumn{1}{r|}{22}&2,1&2,2&2,0&3,1&3,2&3,0&0,1&0,2&0,0&1,1&1,2&1,0\\
\multicolumn{1}{r|}{32}&3,1&3,2&3,0&2,1&2,2&2,0&1,1&1,2&1,0&0,1&0,2&0,0\\
\cline{2-13}
\end{tabular}
\end{tabular}
\end{center}
\caption{The projected squares verifying that $A_1\otimes A_2$ and
$B_1\otimes B_2$ are doubly orthogonal quasi-Sudoku Latin squares
of order $12.$}\label{ex:2superimposition}
\end{figure}

\section{Quasi-Sliced Orthogonal Arrays}\label{sec:oa}

A {\em (symmetric) orthogonal array}, denoted $OA(N,k,s,t)$,  is an $N\times
k$ array with entries chosen from the set $[s]$ of levels such that for
every $N\times t$ submatrix the $s^t$ level combinations of $\overbrace{[s]\times\dots \times [s]}^t$ each occur a constant number of times.
We define a {\em sliced symmetric orthogonal
array} to be an $OA(N,k,s,t)$,
${\cal O}$, which satisfies the following property:
\begin{itemize}
\item[-] there exists a projection $\Pi$ from $[s]$ to $[s_0]$, $s_0<s$, and a partition of the rows of ${\cal O}$ into $\nu$ subarrays, ${\cal O}_i$,
such that when the $s$ levels of
$[s]$ are collapsed according to the projection $\Pi$ each ${\cal
O}_i$ forms a symmetric $OA(N_0,k,s_0,t)$.
\end{itemize}
We say $({\cal O}_1 , {\cal O}_2,\dots,
{\cal O}_\nu )$ is a  sliced symmetric orthogonal
array.

For a $k$ factor design, let $S=\{s_1,\dots,s_k\}$ denote the list of numbers of factor levels.
  An {\em asymmetric orthogonal array}, denoted $OA(N,k,S,t)$,
  is an $N\times k$ array, where the $j$th column contains entries
   of $[s_j]$, $s_j\in S$,  and for each $t$-subset  $T$ of columns,
    the  $N\times t$ submatrix defined by these columns  contains all  $\prod_{i\in T}s_i$ tuples of $\prod_{i\in T}[s_i]$ (the Cartesian product of  $[s_i]$, $i\in T$) a constant, $\lambda_T$, number of times.
We define a {\em quasi-sliced asymmetric orthogonal array} to be an
$OA(N,k,S,t)$ array, ${\cal O}$, which satisfies the following property:
\begin{itemize}
\item[-] there exist projections
$\Pi_i:[s_i]\rightarrow [s_i']$ for $i=1,\dots, k$ and a partition of the rows ${\cal O}$
into $\nu$ subarrays, ${\cal O}_i$,  such that when the sets of levels $[s_1],\dots,[s_k]$ are  respectively collapsed onto  $[s_1'],\dots,[s_k']$, $s_i'<s_i$, each ${\cal O}_i$ is an asymmetric
$OA(N_0,k,S',t)$, $S'=\{s_1',s_2',\dots,s_k'\}$.
\end{itemize}
More precisely
$({\cal O}_1 ,\dots, {\cal O}_\nu )$ is said to be a
{\em quasi-sliced asymmetric orthogonal array}. In the above definition we note that if each  projections is not  one-to-one then it is immediate that $s_i'<s_i$, for all $i$.

\begin{prop}\label{sliced}
If $A_1$ and $B_1$ are pairwise orthogonal Latin squares of order $m$ and
 $A_2$ and $B_2$ are pairwise orthogonal Latin squares of order $n$. Then there exists a
{\em quasi-sliced asymmetric orthogonal array}
$OA(m^2n^2,4,\{mn\},2)$.
\end{prop}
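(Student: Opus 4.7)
The plan is to combine the classical two-MOLS construction of a strength-$2$ orthogonal array with the direct-product decomposition provided by Proposition~\ref{orth prods}. Set $A:=A_1\otimes A_2$ and $B:=B_1\otimes B_2$; by Proposition~\ref{orth prods} these are orthogonal Latin squares of order $mn$. I would form the $m^2n^2\times 4$ array ${\cal O}$ whose rows are
\begin{eqnarray*}
(i,\ j,\ A(i,j),\ B(i,j)),\qquad (i,j)\in[mn]^2.
\end{eqnarray*}
Verifying that ${\cal O}$ is an $OA(m^2n^2,4,mn,2)$ amounts to checking the six column pairs: those involving column $1$ or $2$ are balanced because every row and every column of a Latin square is a permutation of its alphabet, while the pair $(3,4)$ is balanced by the orthogonality of $A$ and $B$ guaranteed by Proposition~\ref{orth prods}.

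For the quasi-sliced structure, I would identify the alphabet $[mn]$ with $[m]\times[n]$ via the direct-product bijection and unfold every entry accordingly, so that
\begin{eqnarray*}
A((p,s),(q,t))&=&(A_1(p,q),A_2(s,t)),\\
B((p,s),(q,t))&=&(B_1(p,q),B_2(s,t)).
\end{eqnarray*}
I would then partition the rows of ${\cal O}$ into $m^2$ slices ${\cal O}_{(p_0,q_0)}$, $(p_0,q_0)\in[m]\times[m]$, by placing the row indexed by $((p,s),(q,t))$ into the slice with $(p,q)=(p_0,q_0)$; each slice then has $n^2$ rows, indexed by $(s,t)\in[n]^2$. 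Take all four projections $\Pi_1=\Pi_2=\Pi_3=\Pi_4$ to be the inner-coordinate map $\Pi:[mn]\to[n]$, $(x_1,x_2)\mapsto x_2$.

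With these choices, a typical projected row of ${\cal O}_{(p_0,q_0)}$ reads $(s,\,t,\,A_2(s,t),\,B_2(s,t))$ for $(s,t)\in[n]^2$, which is exactly the classical two-MOLS construction of an $OA(n^2,4,n,2)$ from the orthogonal pair $A_2,B_2$. Hence every slice is an asymmetric $OA(n^2,4,S',2)$ with $S'=\{n\}$ and $\nu=m^2$, establishing the quasi-sliced property. The main subtlety is the simultaneous compatibility of slicing and projection: slicing by the outer coordinates $(p,q)$ and projecting to the inner coordinates $(s,t)$ aligns the direct-product decomposition with the partition; mixing these (for instance, slicing by $(p,q)$ while projecting some columns to the outer coordinate) would force one of the six pair marginals within a slice to be a deterministic function of a single index, destroying strength~$2$.
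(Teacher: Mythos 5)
Your argument is correct, but it reaches the quasi-sliced structure by a genuinely different route from the paper. You partition the $m^2n^2$ rows into $\nu=m^2$ slices of size $n^2$ (one per outer index pair $(p_0,q_0)$) and collapse all four columns onto $[n]$ via the inner coordinate, so that each projected slice is literally the classical symmetric $OA(n^2,4,n,2)$ obtained from the orthogonal pair $A_2,B_2$; the verification is then immediate and needs only ordinary orthogonality of $A_2$ and $B_2$. The paper instead partitions into only $\nu=m$ slices ${\cal O}_q$ of size $mn^2$ (one per column block $q$) and applies the asymmetric projections $\Pi_1:(p,s)\mapsto s$, $\Pi_2:(q,t)\mapsto t$, $\Pi_3=\Pi_m$, $\Pi_4=\Pi_n$ from the proof of Proposition~\ref{double sudoku}, so that each slice becomes an asymmetric $OA(mn^2,4,\{n,n,m,n\},2)$; checking the six two-column marginals there leans on the doubly orthogonal (quasi-Sudoku) structure rather than on $A_2,B_2$ alone. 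Both constructions satisfy the stated definition, since a symmetric orthogonal array is a special case of an asymmetric one and the collapsings are proper ($n<mn$ and $m<mn$ once $m,n\geq 2$). The trade-off is real: your version yields more, smaller slices with every factor collapsed to $n$ levels, while the paper's version retains $m$ levels in the third column and larger slices, which translates into finer within-slice projection uniformity for that factor in the derived space-filling design. Your closing remark about not mixing outer-coordinate slicing with outer-coordinate projection is also sound: within a slice the outer coordinates of columns 1 and 2 are constant, so projecting either of those columns onto its outer coordinate would leave some level combinations unrepresented.
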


\begin{proof}
``Unstack'' the pair of  orthogonal quasi-Sudoku Latin squares  $A_{1}\otimes A_{2}$ and $B_{1}\otimes B_{2}$
to obtain an orthogonal array $OA(m^2n^2,4, \{mn\},2)$, ${\cal O}$, 
where row $(r,c)$ of ${\cal O}$ takes the form
\begin{eqnarray*}
\left[\begin{array}{cccc}
r,&c,&(A_{1}\otimes A_{2})(r,c),&(B_{1}\otimes B_{2})(r,c)
\end{array}\right].
\end{eqnarray*}

To verify that this is a quasi-sliced asymmetric
orthogonal array  we first provide details of the partitioning of the rows and then  the projections.

 Recall that the rows of the reordered squares $A_1\otimes A_2$ and $B_1\otimes B_2$ are labelled as $(p,s)$, 
  $0\leq p\leq m-1$ and $0\leq s\leq n-1$ and the columns are labelled as $(q,t)$, $0\leq q\leq m-1$ and $0\leq t\leq n-1$.
Now for fixed $q$ let ${\cal O}_{q}$ be the $(mn^2)\times 4$ subarray with rows indexed by $((p,s),(q,t))$, $0\leq p\leq m-1$ and $0\leq s,t\leq n-1$, where a row takes the form
\begin{eqnarray*}
\left[\begin{array}{cccc}
(p,s),&(q,t),&(A_1\otimes A_2)((p,s),(q,t)),(B_1\otimes B_2)((p,s),(q,t))
\end{array}\right].
\end{eqnarray*}
Focusing on the rows corresponding to the subarray ${\cal O}_{q}$,  before we apply the projections $\Pi_1, \Pi_2,\Pi_3,\Pi_4$, we see that for columns 1, 3 and 4 the  set of levels  is $\{(p,s)\mid 0\leq p\leq m-1, 0\leq s\leq n-1\}$, for column 2  the set of levels  is $\{(q,t)\mid  0\leq t\leq n-1\}$.
Next apply the projections
\begin{eqnarray*}
\Pi_1&:&(p,s)\rightarrow s\\
\Pi_2&:&(q,t)\rightarrow t\\
\Pi_3&=&\Pi_m\\
\Pi_4&=&\Pi_n
\end{eqnarray*}
where $\Pi_m$ and $\Pi_n$ are defined in the proof of Proposition \ref{double sudoku}. Note that each of these projections is not one-to-one.

Finally we need to prove that ${\cal O}_{q}$ is a asymmetric orthogonal array with parameters $OA(mn^2,4,S',2)$ where $S'=\{n,n,m,n\}$.

Note columns 3 and 4 of  ${\cal O}_{q}$, are the concatenation of the $n$ sets of pairs \begin{eqnarray*}
\{(\Pi_m(A_1(p,q),A_2(s,t)),\Pi_n(B_1(p,q),B_2(s,t)))\mid
0\leq p\leq m-1,0\leq t\leq n-1\};\end{eqnarray*} one set for each $0\leq s\leq n-1$.
The set
$\{(\Pi_m(A_1(p,q),A_2(s,t)),\Pi_n((B_1(p,q),B_2(s,t)))\mid
0\leq p\leq m-1,0\leq t\leq n-1\} =[m]\times [n]$. Hence columns 3 and 4 are the concatenation of $n$ copies of $[m]\times [n]$.

For fixed $s$
the set of pairs $\{(s,\Pi_m(A_1(p,q),A_2(s,t)))\mid 0\leq p\leq m-1,0\leq t\leq n-1\}$ gives $n$ copies of $\{s\}\times [m]$ and $\{(s,(\Pi_n(B_1(p,q),B_2(s,t)))\mid 0\leq p\leq m-1,0\leq t\leq n\}$ gives $m$ copies of $\{s\}\times [n]$. So, respectively, as $s$ takes the values $0,\dots,n-1$ we obtain  $n$ copies of $[n]\times [m]$ and $m$ copies of $[n]\times [n]$.

 For fixed $s$ the set $\{(t,\Pi_m(A_1(p,q),A_2(s,t)))\mid 0\leq p\leq m-1,0\leq t\leq n-1\}$ gives one copy of $[n]\times [m]$. So as $s$ takes the values $0,\dots,n-1$ we obtain $n$ copies of $[n]\times [m]$. Finally as $s$ takes the values $0,\dots,n-1$ $\{(t,\Pi_n(B_1(p,q),B_2(s,t)))\mid 0\leq p\leq m-1,0\leq t\leq n-1\}$ gives $m$ copies of $[n]\times [n]$. 
 
Thus each ${\cal O}_{q}$, for $0\leq q\leq m-1$ is an asymmetric orthogonal
array with the required parameters. In all cases the
 projections are not one-to-one and the results is a quasi-sliced asymmetric orthogonal array as required.
\end{proof}

Once again, for $m=n$, the extension of these pairwise properties to $K>2$ orthogonal direct product designs $OA(m^2n^2,2+K,\{mn\},2)$ is immediate, if the component designs are available.

By way of example we project the first three columns of the array given in Figure \ref{ex:superimposition}
to obtain the quasi orthogonal array ${\cal O}_0$ ($q=0$) with parameters
$OA(36,4,S',2)$, where $S'=\{3,3,4,3\}$.

\begin{figure}[h]
\begin{center}
${\cal O}_0$\\
\begin{tabular}{c}
0\ 0\ 0\ 0\ 0\ 0\ 0\ 0\ 0\ 0\ 0\ 0\ 1\ 1\ 1\ 1\ 1\ 1\ 1\ 1\ 1\ 1\ 1\ 1\ 2\ 2\ 2\ 2\ 2\ 2\ 2\ 2\ 2\ 2\ 2\ 2\\
0\ 1\ 2\ 0\ 1\ 2\ 0\ 1\ 2\ 0\ 1\ 2\ 0\ 1\ 2\ 0\ 1\ 2\ 0\ 1\ 2\ 0\ 1\ 2\ 0\ 1\ 2\ 0\ 1\ 2\ 0\ 1\ 2\ 0\ 1\ 2\\
0\ 1\ 2\ 3\ 0\ 1\ 2\ 3\ 0\ 1\ 2\ 3\ 1\ 2\ 0\ 0\ 1\ 3\ 3\ 0\ 2\ 2\ 3\ 1\ 2\ 0\ 1\ 1\ 3\ 0\ 0\ 2\ 3\ 3\ 1\ 2\\
0\ 1\ 2\ 0\ 1\ 2\ 0\ 1\ 2\ 0\ 1\ 2\ 2\ 0\ 1\ 2\ 0\ 1\ 2\ 0\ 1\ 2\ 0\ 1\ 1\ 2\ 0\ 1\ 2\ 0\ 1\ 2\ 0\ 1\ 2\ 0
\end{tabular}
\end{center}
\caption{The projected sliced array ${\cal O}_0$.}\label{ex:quasi-slice}
\end{figure}

\section{Construction of Quasi-Sudoku-Based Sliced Space-Filling Designs}\label{construction2}

Sliced space-filling designs can be constucted from the doubly orthogonal quasi-Sudoku Latin squares or the quasi-sliced asymmetic orthogonal array using the techniques in \cite{qianWu}, \cite{tang}, and \cite{XuHaalandQian}. In brief, the column-wise procedure is as follows. First, randomly relabel each column's elements $1,\ldots,mn$ subject to the constraint that the elements mapped to the same symbol by $\Pi$ form a consecutive subset of $1,\ldots,mn$. Next, replace the elements in each column with symbol $k$ with a random permutation of $(k-1)mn+1,\ldots,(k-1)mn+mn$. Lastly, each element of the space-filling is generated as $(x_{ij}-u_{ij})/(mn)^2$, where $x_{ij}$ denotes the design elements after the first two steps and $u_{ij}$ denotes a random ${\rm Uniform}(0,1)$ deviate. As an example, Figure \ref{spacefilling} shows a sliced space-filling design constructed based on the quasi-sliced asymmetic orthogonal array whose first sliced is shown in Figure \ref{ex:quasi-slice}. 

As shown in \cite{qianWu} and \cite{XuHaalandQian}, the complete design achieves uniformity in one and two-dimensional projections, while the slices are guaranteed to achieve uniformity in two-dimensional projections. On the other hand, the columns based on the doubly orthogonal quasi-Sudoku Latin squares can be divided into a larger number of slices, as defined by the quasi-Sudoku sub-squares, each of which has guaranteed uniformity under one and two-dimensional projections, as shown in \cite{XuHaalandQian}.

\begin{figure}[!h]
\centering
\includegraphics[height=6in,width=6in]{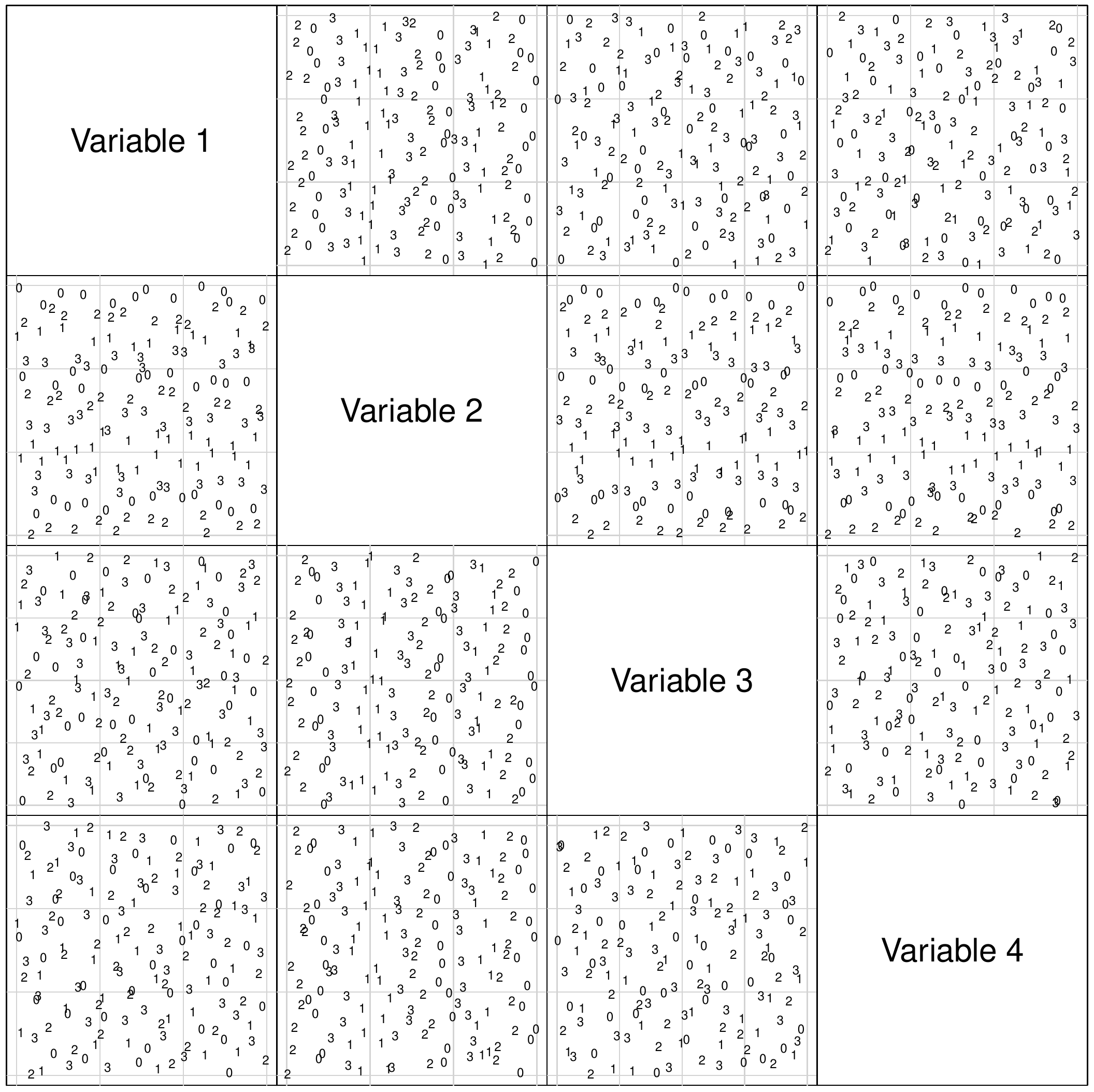}
\caption{Sliced space-filling design based on a quasi-sliced asymmetic orthogonal array. Slices indicated by point labels.}\label{spacefilling}
\end{figure}

%
%
%


\end{document}